\newtheorem*{rep@theorem}{\rep@title}
\newcommand{\newreptheorem}[2]{%
\newenvironment{rep#1}[1]{%
 \def\rep@title{#2 \ref{##1}}%
 \begin{rep@theorem}}%
 {\end{rep@theorem}}}
\def\po{{\partial \Omega}}
\def\H{{\mathcal H(\Omega)}}
\numberwithin{equation}{section}
\theoremstyle{plain}
\newtheorem{theorem}{Theorem}[section]
\theoremstyle{plain}
\newtheorem{prop}[theorem]{Proposition}
\theoremstyle{plain}
\newtheorem{lemma}[theorem]{Lemma}
\theoremstyle{plain}
\theoremstyle{definition}
\theoremstyle{definition}
\theoremstyle{definition}
\newtheorem{remark}[theorem]{Remark}
\theoremstyle{definition}
\theoremstyle{plain}
\theoremstyle{definition}
\begin{document}

\renewcommand{\labelenumi}{\textit{(\roman{enumi})}}

\title[Perturbed Schrödinger-Bopp-Podolsky system]{Existence of solutions with prescribed frequency for the perturbed Schrödinger-Bopp-Podolsky system in bounded domains}

\author[Danilo Gregorin Afonso]{Danilo Gregorin Afonso}
\address[Danilo Gregorin Afonso]{Dipartimento di Scienze Pure e Applicate \\
 Università degli Studi di Urbino Carlo Bo \\ Piazza della Repubblica 13, 61029 Urbino, Italy}
\email{danilo.gregorinafonso@uniurb.it}

\author[Bruno Mascaro]{Bruno Mascaro}
\address[Bruno Mascaro]{Faculdade de Computação e Informática \\ Universidade Presbiteriana Mackenzie \\ R. da Consolação 930, São Paulo, Brasil}
\email{bruno.mascaro@mackenzie.br}

\subjclass[2020]{35D30, 35J10, 35J20, 35J35, 35J40, 35J58, 35J91, 35Q40}

\keywords{Schrödinger-Bopp-Podolsky systems, Dirichlet boundary conditions, higher-order elliptic problems, variational methods}

\date{\today}

\begin{abstract}
    In this paper, we show that the Schrödinger-Bopp-Podolsky system with Dirichlet boundary conditions in a bounded domain possesses infinitely many solutions of prescribed frequency, for any set of (continuous) boundary conditions, provided that the Schrödinger equation is perturbed with a suitable nonlinearity. Our approach is variational, and our proof is based on a symmetric variant of the Mountain Pass theorem.
\end{abstract}

\maketitle

\section{Introduction}
\label{sec:intro}

In this paper, we are interested in analysing the existence of solutions to the so-called Schrödinger-Bopp-Podolsky system
\begin{equation}
    \label{eq:pde}
    \left\{
    \begin{array}{rcll}
        - \frac{1}{2} \Delta u + \phi u - g(x, u) & = & \omega u & \quad \text{ in } \Omega \\
        - \Delta \phi + \Delta^2 \phi & = & 4 \pi u^2 & \quad \text{ in } \Omega
    \end{array}
    \right.
    ,
\end{equation}
where $\Omega$ is a smooth bounded domain in $\mathbb R^3$, $g$ is a suitable nonlinearity and $\Delta^2 \phi = \Delta (\Delta \phi)$ is the bi-Laplacian operator. We consider Dirichlet boundary conditions, i.e.,
\begin{equation}
    \label{eq:Dirichlet_BC}
    \left\{
    \begin{array}{rcll}
        u & = & 0 & \quad \text{ on } \po \\
        \phi & = & h_1 & \quad \text{ on } \po \\
        \Delta \phi & = & h_2 & \quad \text{ on } \po \\
    \end{array}
    \right.
    ,
\end{equation}
and assume, for simplicity, that $h_1, h_2 \in C(\po)$. We refer to \cite{GazzolaGrunauSweers2010} for a discussion of appropriate boundary operators for higher-order elliptic problems in bounded domains.

The system of equations \eqref{eq:pde} models the (stationary) interaction of a charged particle with an electromagnetic field with the ansatz that the wave function is of the form
\begin{equation*}
    \psi(x, t) = u(x) e^{i \omega t},
\end{equation*}
where $u$ plays the role of the amplitude of the wave and $\omega$ is the frequency. To our knowledge, the first variational analysis of this kind of system appeared in \cite{dAveniaSiciliano2019} (see also \cite{QuoirinSicilianoSilva2024, MascaroSiciliano2022}) in the case of the whole space $\mathbb R^3$. In fact, \eqref{eq:pde} is a refinement of the much more studied Schrödinger-Maxwell system, introduced in \cite{BenciFortunato1998} (see also \cite{PisaniSiciliano2007b,PisaniSiciliano2007,PisaniSiciliano2013} and the references therein), where the second equation is just $- \Delta \phi = 4\pi u^2$. Besides the physical motivation, which consists of trying to overcome the so-called infinity problem of classical Maxwell theory (we refer to \cite{dAveniaSiciliano2019} for more on the physical aspects of the problem), the addition of the bi-Laplacian in the second equation gives rise to many interesting mathematical phenomena also when one considers boundary value problems in bounded domains, see e.g. \cite{AfonsoSiciliano2021}.

The main approaches developed to treat \eqref{eq:pde} are variational, and a choice is to be made. One can consider a normalization condition of the type 
\begin{equation*}
    \int_\Omega u^2 \ dx = 1,
\end{equation*}
in which case the parameter $\omega$ appears as a Lagrange multiplier (as done, e.g., in \cite{BenciFortunato1998, PisaniSiciliano2013, AfonsoSiciliano2021}), or one can perform a parametric analysis on the values of $\omega$ for which the free problem has a solution (see \cite{PisaniSiciliano2007b}). 

In this work, we follow the second approach. We are able to show that, provided that we perturb the Schrödinger equation with a suitable nonlinearity, then for any prescribed frequency $\omega$ and any set of continuous boundary conditions $h_1$ and $h_2$, the system \eqref{eq:pde}-\eqref{eq:Dirichlet_BC} possesses infinitely many solutions $(u, \phi)$ (see Theorem \ref{thm:main_Dirichlet}).

This work is organized as follows. In Section \ref{sec:prelim} we collect the main notations and definitions, and recall an important result that will be useful in our proofs. Section \ref{sec:existence} is devoted to the variational analysis of the perturbed problem and the proof of our main result, Theorem \ref{thm:main_Dirichlet}. We address the question of non-existence for the unperturbed problem in Section \ref{sec:nonexistence}.

\section{Preliminaries}
\label{sec:prelim}

\subsection{Notations and definitions}

Throughout the paper, $\Omega$ is a smooth, bounded domain (connected open set). For $1 \leq p \leq \infty$, $\|\cdot\|_p$ denotes the $L^p$ norm (whether on $\Omega$ or $\po$ will be clear from the context). As usual, we denote by $H_0^1(\Omega)$ the completion of $C_c^\infty(\Omega)$ with respect to the Sobolev norm $W^{1, 2}(\Omega)$. However, we consider $H_0^1(\Omega)$ with the equivalent norm
\begin{equation*}
    \|u\| \coloneqq \|\nabla u\|_2, \quad u \in H_0^1(\Omega).
\end{equation*}
Its dual space is denoted by $H^{-1}(\Omega)$.

The eigenvalues of $- \Delta$ in $H_0^1(\Omega)$ (counted with multiplicity) are denoted by $\lambda_k$, with $k \in \mathbb N$. The corresponding eigenspaces are denoted by $H_k$.

The Sobolev space $W^{2, 2}(\Omega)$ is, as usual, denoted by $H^2(\Omega)$. We also consider the functional space
\begin{equation*}
    \H \coloneqq H^2(\Omega) \cap H_0^1(\Omega)
\end{equation*}
endowed with the equivalent norm 
\begin{equation*}
    \|\varphi\|_\H \coloneqq \|\Delta \varphi\|_2, \quad \varphi \in \H.
\end{equation*}

Recall that a $C^1$ functional $J$ defined in a Banach space $E$ is said to satisfy the Palais-Smale condition if any sequence $(u_n)_{n \in \mathbb N}$ for which $(J(u_n))_{n \in \mathbb N}$ is bounded and $J'(u_n) \to 0$ as $n \to \infty$ possesses a convergent subsequence.

In our proofs, we perform some estimates and denote by $c_j$, $j \in \mathbb N$, some positive constants appearing in these estimates and whose exact values are not of interest.

On the nonlinearity $g \in C(\overline \Omega \times \mathbb R)$ appearing in \eqref{eq:pde} we assume the following:
\begin{enumerate}[label={($g_\arabic*$)}]
    \item \label{g1} $g$ is anti-symmetric: $g(x, - \xi) = - g(x, \xi)$ for all $x \in \overline \Omega, \xi \in \mathbb R$;

    \item \label{g2} $g$ satisfies
    \begin{equation*}
        \lim_{\xi \to 0} \frac{g(x, \xi)}{\xi} = 0 \quad \text{uniformly in } x;
    \end{equation*}

    \item \label{g3} there exist constants $a_1, a_2 \geq 0$ and $p \in (4, 6)$ such that for any $x \in \overline \Omega$ and $\xi \in \mathbb R$ it holds
    \begin{equation*}
        |g(x, \xi)| \leq a_1 + a_2 |\xi|^{p - 1};
    \end{equation*}

    \item \label{g4} there exist $r > 0$ and $\mu > 4$ such that for any $\xi \in \mathbb R$ with $|\xi| \geq r$ and any $x \in \overline \Omega$ it holds
    \begin{equation*}
        0 \leq \mu G(x, \xi) \leq \xi g(x, \xi),
    \end{equation*}
    where
    \begin{equation*}
        G(x, \xi) = \int_0^\xi g(x, t) \ dt.
    \end{equation*}
\end{enumerate}

\subsection{An abstract critical point theorem}

Here, we recall a version of the Mountain Pass Theorem for functionals that are symmetric with respect to the action of the group $\mathbb Z_2 = \{- \text{id}, \text{id}\}$ that will be useful in our proof.

\begin{theorem}[{\cite[Theorem 9.12]{Rabinowitz1986}}]
    \label{thm:Mountain_Pass}
    Let $E$ be an infinite-dimensional Banach space, $J \in C^1(E)$ be an even functional such that $J(0) = 0$ and $J$ satisfies the Palais-Smale condition. Suppose that $E = V \oplus X$, where $V$ is finite-dimensional and $J$ satisfies
    \begin{enumerate}
        \item There exist constants $\rho, \alpha > 0$ such that
        \begin{equation*}
            J_{B_\rho \cap X} \geq \alpha;
        \end{equation*}

        \item for each finite dimensional subspace $\widetilde E$ of $E$, there exists an $R = R(\widetilde E)$ such that $J \leq 0$ in $E \setminus B_{R(\widetilde E)}$.
    \end{enumerate}
    Then $I$ possesses an unbounded sequence of critical values, and therefore there exist infinitely many critical points.
\end{theorem}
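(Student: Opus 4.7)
The plan is to prove the theorem by the classical Krasnoselskii--Ljusternik--Schnirelmann scheme: construct a sequence of symmetric minimax values $c_n$, use the $\mathbb Z_2$-symmetry together with the Palais-Smale condition to show that each $c_n$ is a critical value of $J$, and then use a topological (genus) argument to force $c_n \to \infty$.

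First, I would fix an increasing chain $V \subset E_1 \subset E_2 \subset \cdots$ of finite-dimensional subspaces of $E$. By assumption (2), for each $n$ there is $R_n = R(E_n) > 0$ such that $J \leq 0$ on $E_n \setminus B_{R_n}$. Setting $D_n = \overline{B_{R_n}} \cap E_n$, I introduce the admissible class
\begin{equation*}
    \Gamma_n = \bigl\{ h \in C(D_n, E) : h \text{ is odd and } h|_{\partial D_n} = \mathrm{id} \bigr\},
\end{equation*}
together with the minimax value $c_n = \inf_{h \in \Gamma_n} \max_{u \in D_n} J(h(u))$. The key topological fact I would need is that, for $n$ strictly larger than $\dim V$, the image $h(D_n)$ of any $h \in \Gamma_n$ must intersect $\partial B_\rho \cap X$; this follows from a Borsuk--Ulam / Krasnoselskii genus argument applied to the odd map $P_X \circ h$, where $P_X$ denotes the projection onto $X$ along $V$. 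Combined with hypothesis (1), this yields $c_n \geq \alpha$ for all such $n$, and a genus comparison between $\Gamma_n$ and $\Gamma_{n+k}$ forces $c_n \to \infty$.

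To prove that each $c_n$ is a critical value, I would invoke the symmetric deformation lemma: if $c_n$ were a regular value, the Palais-Smale condition and the evenness of $J$ would yield an odd homeomorphism $\eta$ of $E$ sending $\{J \leq c_n + \varepsilon\}$ into $\{J \leq c_n - \varepsilon\}$ while fixing the sublevel set $\{J \leq 0\}$; the latter property guarantees $\eta|_{\partial D_n} = \mathrm{id}$, so that $\eta \circ h$ is again an element of $\Gamma_n$ whenever $h \in \Gamma_n$. Choosing an almost-optimal $h$ then contradicts the definition of $c_n$.

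The main obstacle, in my view, is the intersection-and-genus step: it is here that the splitting $E = V \oplus X$ and the geometric conditions (1)--(2) must interact with the $\mathbb Z_2$-symmetry, and setting up the minimax classes $\Gamma_n$ so as to simultaneously force $c_n \geq \alpha$ and $c_n \to \infty$ is the genuinely delicate part. Once these topological facts are in place, the Palais-Smale-based symmetric deformation argument proceeds along standard lines.
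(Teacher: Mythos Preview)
The paper does not prove this theorem at all: it is merely \emph{stated} as a known abstract tool, with the proof delegated entirely to the cited reference \cite[Theorem~9.12]{Rabinowitz1986}. There is therefore no ``paper's own proof'' to compare your proposal against.

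That said, your outline is a reasonable sketch of the standard argument actually given in Rabinowitz's monograph: the minimax classes $\Gamma_n$ built from odd extensions of the identity on finite-dimensional balls, the Borsuk--Ulam/genus intersection lemma showing $h(D_n)\cap(\partial B_\rho\cap X)\neq\emptyset$ whenever $\dim E_n>\dim V$, and the equivariant deformation lemma to identify the $c_n$ as critical values. The one point where your sketch is a bit thin is the claim that $c_n\to\infty$; in Rabinowitz's treatment this is obtained not by a direct ``genus comparison between $\Gamma_n$ and $\Gamma_{n+k}$'' but via an index/genus-based multiplicity argument (if infinitely many $c_n$ coincide at some level $c$, the critical set at level $c$ has infinite genus, contradicting compactness from Palais--Smale). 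You may want to sharpen that step if you intend to write out a full proof, but for the purposes of the present paper no proof is needed.
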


\section{Existence of solutions}
\label{sec:existence}
\subsection{Variational framework}

In order to perform a variational analysis of the problem, it is convenient to slightly modify our system so as to make all boundary conditions homogeneous. To this aim, we consider the following auxiliary problem:
\begin{equation}
    \label{eq:auxiliary_problem_Dirichlet}
    \left\{
    \begin{array}{rcll}
        - \Delta \chi + \Delta^2 \chi & = & 0 & \quad \text{ in } \Omega \\
        \chi & = & h_1 & \quad \text{ on } \po \\
        \Delta \chi & = & h_2 & \quad \text{ on } \po
    \end{array}
    \right.
    .
\end{equation}

\begin{lemma}
    \label{lem:exitence_auxiliary_problem}
    Let $h_1, h_2 \in C(\Omega)$. Then there exists a weak solution $\chi \in H^2(\Omega)$ to \eqref{eq:auxiliary_problem_Dirichlet}. Moreover, $\chi \in C^4(\Omega) \cap C(\overline \Omega)$.
\end{lemma}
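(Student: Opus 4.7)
The plan is to exploit the Navier-type structure of the boundary conditions: introducing $v \coloneqq -\Delta \chi$ decouples \eqref{eq:auxiliary_problem_Dirichlet} into a chain of two second-order Dirichlet problems. Indeed, $v$ must then satisfy
\begin{equation*}
    -\Delta v + v = 0 \text{ in } \Omega, \qquad v = -h_2 \text{ on } \po,
\end{equation*}
and $\chi$ is recovered from
\begin{equation*}
    -\Delta \chi = v \text{ in } \Omega, \qquad \chi = h_1 \text{ on } \po.
\end{equation*}
Since $\Omega$ is smooth, every boundary point is regular in the potential-theoretic sense, so the classical Dirichlet theory for second-order linear elliptic operators with continuous boundary data (e.g.\ Perron's method) yields a unique $v \in C^\infty(\Omega)\cap C(\overline{\Omega})$, and then a unique $\chi \in C^\infty(\Omega)\cap C(\overline{\Omega})$.

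It remains to verify that this $\chi$ solves \eqref{eq:auxiliary_problem_Dirichlet}. A direct computation gives $\Delta^2 \chi = -\Delta v = -v = \Delta \chi$ in $\Omega$, while by construction $\chi = h_1$ and $\Delta \chi = -v = h_2$ on $\po$. Iterating interior elliptic regularity for the Poisson equation $-\Delta\chi = v$ with $v \in C^\infty(\Omega)$ then gives $\chi \in C^4(\Omega) \cap C(\overline{\Omega})$, as claimed.

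The main obstacle, as I see it, is upgrading this classical solution to membership in $H^2(\Omega)$, since boundary data merely in $C(\po)$ do not a priori carry the $H^{3/2}$- and $H^{1/2}$-trace regularity that would immediately yield $\chi \in H^2(\Omega)$ via standard boundary elliptic regularity. I would handle this by an approximation argument: pick smooth $h_1^{(n)}, h_2^{(n)}$ converging uniformly on $\po$ to $h_1, h_2$, build the corresponding smooth classical solutions $\chi_n$ via the above procedure, and derive a uniform bound $\|\chi_n\|_{\H} \le C$ by testing the bilinear form naturally associated with \eqref{eq:auxiliary_problem_Dirichlet} against a suitable $H^2$-lift of the boundary data (noting that $\Delta \chi_n = -v_n$ remains uniformly bounded in $L^\infty(\Omega) \subset L^2(\Omega)$ by the maximum principle applied to each $v_n$). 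Passing to a weak-$H^2$ limit yields a function in $H^2(\Omega)$ that, by uniqueness of the Perron solution, coincides with the $\chi$ constructed above, thereby establishing the $H^2$ regularity and completing the lemma.
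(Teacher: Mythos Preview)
Your decoupling via the Navier substitution $v=-\Delta\chi$ is exactly the route the paper takes: it sets $\theta=\Delta\chi$, solves $-\Delta\theta+\theta=0$ with $\theta=h_2$ on $\po$, and then solves $-\Delta\chi=\theta$ (up to a sign slip) with $\chi=h_1$ on $\po$, invoking standard second-order Dirichlet theory and interior elliptic regularity to reach $\chi\in C^4(\Omega)\cap C(\overline\Omega)$. So on the core construction you and the paper coincide.

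Where you go further is in worrying about $\chi\in H^2(\Omega)$; the paper simply asserts $\theta\in H^1(\Omega)$ and $\chi\in H^1(\Omega)$ by ``well-known results'' and does not revisit the global $H^2$ membership at all. Your instinct that merely continuous boundary data do not automatically sit in the right trace spaces is sound. However, the approximation argument you sketch does not close as written. Uniform convergence $h_i^{(n)}\to h_i$ in $C(\po)$ gives you no control on $\|h_1^{(n)}\|_{H^{3/2}(\po)}$ or $\|h_2^{(n)}\|_{H^{1/2}(\po)}$, so the $H^2$-lifts you propose to subtract need not be uniformly bounded in $H^2(\Omega)$; and the maximum-principle bound $\|\Delta\chi_n\|_{L^\infty}=\|v_n\|_{L^\infty}\le C$ is by itself insufficient, since for functions with nonzero Dirichlet trace there is no estimate of the form $\|\chi\|_{H^2}\le C(\|\Delta\chi\|_{L^2}+\|\chi\|_{L^2})$ (harmonic extensions of rough continuous boundary data already show this fails). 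In short: your existence and interior-regularity argument matches the paper's, but the upgrade to global $H^2(\Omega)$, which the paper does not actually justify either, would require more than what your sketch provides.
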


\begin{proof}
    Notice that substituting $\theta = \Delta \chi$ we obtain
    \begin{equation*}
        - \Delta \chi + \Delta^2 \chi = \Delta \theta - \theta = 0 \quad \text{ in } \Omega.
    \end{equation*}
    Now, by well-known results of linear elliptic equations (see, e.g., \cite{Brezis2010}), the problem
    \begin{equation*}
        \left\{ 
        \begin{array}{rcll}
            - \Delta \theta + \theta & = & 0 & \quad \text{ in } \Omega \\
            \theta & = & h_2 & \quad \text{ on } \po
        \end{array}
        \right.
    \end{equation*}
    admits a (unique) weak solution $\theta \in H^1(\Omega)$. Moreover, standard regularity estimates (see, e.g., \cite{GilbargTrudinger2001}) yield $\theta \in C^2(\Omega) \cap C(\overline \Omega)$.

    It is also well-known that the problem
    \begin{equation*}
        \left\{
        \begin{array}{rcll}
            - \Delta \chi & = & \theta & \quad \text{ in } \Omega \\
            \chi & = & h_1 & \quad \text{ on } \po
        \end{array}
        \right.
    \end{equation*} 
    admits a unique weak solution $\chi \in H^1(\Omega)$. Moreover, regularity theory yields $\chi \in C^4(\Omega) \cap C(\overline \Omega)$.

    The proof is complete, since $\chi$ thus found satisfies \eqref{eq:auxiliary_problem_Dirichlet}.
\end{proof}

Next, we make the change of variables
\begin{equation*}
    \varphi = \phi - \chi,
\end{equation*}
in such a way as to write the system in the variables $(u, \varphi)$:
\begin{equation}
    \label{eq:modified_system_Dirichlet}
    \left\{
    \begin{array}{rcll}
        - \frac{1}{2} \Delta u + (\varphi + \chi)u - g(x, u) & = & \omega u & \quad \text{ in } \Omega \\
        - \Delta \varphi + \Delta^2 \varphi & = & 4 \pi u^2 & \quad \text{ in } \Omega \\
        u & = & 0 &\quad \text{ on } \po \\
        \varphi & = & 0 & \quad \text{ on } \po \\
        \Delta \varphi & = & 0 & \quad \text{ on } \po
    \end{array}
    \right.
    .
\end{equation}

Let us consider the functional $F_\omega: H_0^1(\Omega) \times \H \to \mathbb R$ given by
\begin{align}
    F_\omega(u, \varphi)
    & = \frac{1}{4} \int_\Omega |\nabla u|^2 \ dx + \frac{1}{2} \int_\Omega (\varphi + \chi - \omega) u^2 \ dx - \int_\Omega G(x, u) \ dx \nonumber \\
    & \quad - \frac{1}{16 \pi} \int_\Omega |\nabla \varphi|^2 \ dx - \frac{1}{16 \pi} \int_\Omega |\Delta \varphi|^2 \ dx. \nonumber
\end{align}

All terms of this functional, except for $\int_\Omega G(x, u) \ dx$, are linear or quadratic forms in the variables $u$ and $\varphi$, and thus are of class $C^1$ (see \cite{BadialeSerra2011}). Moreover, also $\int_\Omega G(x, u) \ dx$ is of class $C^1$, due to the subcritical growth assumption \ref{g2} (see, e.g, \cite[Appendix B]{Rabinowitz1986}). Therefore $F_\omega \in C^1(H_0^1(\Omega) \times \H)$. Straightforward computations yield the following expressions for the partial derivatives:
\begin{align}
    & \frac{\partial F_\omega}{\partial u}(u, \varphi)[v] = \frac{1}{2} \int_\Omega \nabla u \nabla v \ dx + \int_\Omega (\varphi + \chi - \omega) u v  \ dx - \int_\Omega g(x, u) v \ dx, \quad v \in H_0^1(\Omega), \label{eq:F_u} \\
    & \frac{\partial F_\omega}{\partial \varphi}(u, \varphi)[\eta] = \frac{1}{2} \int_\Omega \eta u^2 \ dx - \frac{1}{8\pi} \int_\Omega \nabla \varphi \nabla \eta - \frac{1}{8\pi} \int_\Omega \Delta \varphi \Delta \eta \ dx, \quad \eta \in \H. \label{eq:F_varphi}
\end{align}

From the expressions of the partial derivatives, we readily obtain that
\begin{prop}
    \label{prop:variational_formulation_Dirichlet}
    The pair $(u, \varphi) \in H_0^1(\Omega) \times \H$ is a weak solution to \eqref{eq:modified_system_Dirichlet} if and only if $(u, \varphi)$ is a critical point of $F_\omega$ in $H_0^1(\Omega) \times \H$.
\end{prop}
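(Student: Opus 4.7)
The plan is to check the equivalence by direct inspection, exploiting the fact that $F_\omega\in C^1(H_0^1(\Omega)\times\H)$ on a product Banach space, so that $(u,\varphi)$ is a critical point of $F_\omega$ if and only if both partial derivatives \eqref{eq:F_u} and \eqref{eq:F_varphi} vanish simultaneously for all admissible test functions. It then suffices to recognize each of these two identities as the weak formulation of the corresponding equation in \eqref{eq:modified_system_Dirichlet}.

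First I would handle the Schrödinger equation. Setting the expression in \eqref{eq:F_u} equal to zero for every $v\in H_0^1(\Omega)$ is, after one application of Green's first identity (whose boundary term vanishes because $v=0$ on $\po$), exactly the weak formulation of $-\tfrac12\Delta u+(\varphi+\chi)u-g(x,u)=\omega u$ in $\Omega$, with the essential boundary condition $u=0$ on $\po$ encoded in the choice of the ambient space $H_0^1(\Omega)$.

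For the Bopp-Podolsky equation I would proceed analogously but be careful with the boundary terms produced by two integrations by parts. Requiring \eqref{eq:F_varphi}$=0$ and multiplying through by $8\pi$ yields
\[
\int_\Omega \nabla\varphi\cdot\nabla\eta\,dx+\int_\Omega \Delta\varphi\,\Delta\eta\,dx=4\pi\int_\Omega u^2\eta\,dx,\qquad\forall\,\eta\in\H.
\]
Two successive applications of Green's identity rewrite the left-hand side as $\int_\Omega(-\Delta\varphi+\Delta^2\varphi)\eta\,dx$ up to boundary contributions on $\po$ involving $\eta$, $\Delta\varphi$, $\partial_\nu\varphi$ and $\partial_\nu\eta$. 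All terms carrying the factor $\eta$ vanish because $\eta\in H_0^1(\Omega)$, and freely varying $\partial_\nu\eta$ on $\po$ forces the natural boundary condition $\Delta\varphi=0$ on $\po$. Hence \eqref{eq:F_varphi}$=0$ is equivalent to $-\Delta\varphi+\Delta^2\varphi=4\pi u^2$ in $\Omega$ together with the boundary conditions $\varphi=0$ and $\Delta\varphi=0$ on $\po$, the former essential (built into $\H$) and the latter natural. The only (mild) subtlety throughout is keeping track of which boundary conditions are essential and which are natural; once that bookkeeping is in place, both implications follow at once.
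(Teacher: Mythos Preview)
Your proposal is correct and follows the same route the paper implicitly takes: the paper offers no detailed argument, merely stating that the proposition follows ``from the expressions of the partial derivatives'' in \eqref{eq:F_u}--\eqref{eq:F_varphi}. You have simply spelled out the computation the authors suppress, including the bookkeeping of essential versus natural boundary conditions for the fourth-order equation; this is standard and in line with what the paper intends.
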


Observe that the functional is strongly indefinite both from above and from below. More precisely, for every fixed pair $(u, \varphi) \in H_0^1(\Omega) \times \H$, we have $F_\omega(tu, \varphi) \to + \infty$ as $t \to + \infty$, because the gradient term ``wins" against the $L^p$-subcritical terms (due to the Sobolev embeddings). Similarly, $F_\omega(u, t \varphi) \to - \infty$ as $t \to + \infty$, because of the quadratic terms with negative sign.

Due to this fact, standard variational methods do not apply directly, as was already noticed in \cite{BenciFortunato1998}. Indeed, a key idea of the argument presented in \cite{BenciFortunato1998} (and successfully employed by many other authors) is to perform a suitable ``substitution" in the system, thereby reducing the study to an analysis of a functional of a single variable. 

The procedure goes as follows. For each $u \in H_0^1(\Omega)$, we consider the unique weak solution $\Phi_u \in \H$ of the problem
\begin{equation}
    \label{eq:equation_for_Phi_Dirichlet}
    \left\{
    \begin{array}{rcll}
        - \Delta \varphi + \Delta^2 \varphi & = & 4 \pi u^2 & \quad \text{ in } \Omega \\
        \varphi & = & 0 & \quad \text{ on } \po \\
        \Delta \varphi & = & 0 & \quad \text{ on } \po,
    \end{array}
    \right.
    ,
\end{equation}
which can be shown to exist by a slight modification of the argument presented in the proof of Lemma \ref{lem:exitence_auxiliary_problem}. In this way, we can define a map
\begin{equation*}
    \Phi: u \in H_0^1(\Omega) \mapsto \Phi(u) = \Phi_u \in \H.
\end{equation*}
Observe that the map $\Phi$ is implicitly defined by the equation
\begin{equation}
    \label{eq:implicit_def_Phi_Dirichlet}
    \frac{\partial F_\omega}{\partial \varphi} (u, \varphi) = 0, \quad \varphi \in \H,
\end{equation}
which is nothing more than the weak formulation of \eqref{eq:equation_for_Phi_Dirichlet}.

Observe that $\Phi$ is even and $\Phi(0) = 0$. Moreover, we have the following:

\begin{lemma}
    \label{lem:map_Phi}
    The map $\Phi$ is of class $C^1$ and bounded. Moreover, 
    \begin{equation}
        \label{eq:bound_Phi}
        \|\Phi(u)\| \leq C\|u\|^2
    \end{equation}
    for some $C > 0$.
\end{lemma}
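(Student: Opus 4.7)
The plan is to observe that $\Phi(u)$ is defined by a linear elliptic problem with right-hand side depending on $u^2$, so Lax-Milgram provides existence, uniqueness and an a priori estimate, and the $C^1$ regularity then follows from realising $\Phi$ as a composition of a fixed linear isomorphism with a smooth polynomial-type map into a dual space.

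First, I would reformulate \eqref{eq:implicit_def_Phi_Dirichlet} as a Lax-Milgram problem on $\H$. Set
\begin{equation*}
    a(\varphi,\eta) = \tfrac{1}{8\pi}\int_\Omega \nabla\varphi\cdot\nabla\eta\,dx + \tfrac{1}{8\pi}\int_\Omega \Delta\varphi\,\Delta\eta\,dx, \qquad L_u(\eta) = \tfrac{1}{2}\int_\Omega u^2\eta\,dx.
\end{equation*}
Continuity of $a$ on $\H\times\H$ is immediate, and coercivity follows at once from the choice of the equivalent norm $\|\varphi\|_\H=\|\Delta\varphi\|_2$, since $a(\varphi,\varphi)\ge \tfrac{1}{8\pi}\|\varphi\|_\H^{2}$. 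For $L_u$, I would apply Hölder's inequality with exponents $(12/5,12/5,6)$ together with the Sobolev embeddings $H_0^1(\Omega)\hookrightarrow L^{12/5}(\Omega)$ and $\H\hookrightarrow L^{6}(\Omega)$ to get $|L_u(\eta)|\le c\|u\|^{2}\|\eta\|_\H$. The Lax-Milgram lemma then produces a unique $\Phi(u)\in\H$ satisfying $a(\Phi(u),\eta)=L_u(\eta)$ for every $\eta$, which is precisely the weak formulation of \eqref{eq:equation_for_Phi_Dirichlet}, and the estimate $\|\Phi(u)\|_\H\le 8\pi\|L_u\|_{\H^{*}}\le C\|u\|^{2}$ yields both the bound \eqref{eq:bound_Phi} and the boundedness of $\Phi$ on bounded sets.

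For the $C^1$ regularity, I would factor $\Phi$ as follows. Let $S\colon\H\to\H^{*}$ be the linear continuous operator associated with $a$; by Lax-Milgram $S$ is an isomorphism. Let $N\colon H_0^{1}(\Omega)\to\H^{*}$ be defined by $\langle N(u),\eta\rangle=\tfrac{1}{2}\int_\Omega u^{2}\eta\,dx$. Then $\Phi=S^{-1}\circ N$, so it suffices to prove that $N$ is of class $C^{1}$. A direct computation of $N(u+h)-N(u)$ suggests the candidate derivative $\langle DN(u)[h],\eta\rangle=\int_\Omega u h\eta\,dx$; the same Hölder/Sobolev estimate as above bounds this by $c\|u\|\,\|h\|\,\|\eta\|_\H$, and the remainder $\tfrac{1}{2}\int_\Omega h^{2}\eta\,dx$ is controlled by $c\|h\|^{2}\|\eta\|_\H=o(\|h\|)$. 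Finally, since $u\mapsto DN(u)$ is itself linear in $u$, continuity of $DN$ is trivial, hence $N\in C^{1}$ and therefore so is $\Phi$.

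The only non-routine point is matching the Sobolev exponents in 3D so that the quadratic form $u\mapsto u^{2}$ lands in the correct dual space $\H^{*}$; once $(12/5,12/5,6)$ is identified as the right triple, everything else is a standard application of Lax-Milgram and differentiation of a polynomial Nemytskii-type map. I expect no serious obstacle beyond bookkeeping.
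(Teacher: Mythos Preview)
Your argument is correct. The a priori bound is obtained essentially the same way as in the paper---testing the weak formulation against $\Phi(u)$ itself and using Sobolev embeddings---though the paper uses the simpler H\"older pairing $\int_\Omega \Phi(u)\,u^2\,dx \le \|\Phi(u)\|_2\|u\|_4^2$ rather than your $(12/5,12/5,6)$ triple; both are fine in dimension three (indeed, since $\H\hookrightarrow L^\infty$ here, even $(2,2,\infty)$ would work).

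Where the two proofs genuinely differ is in the $C^1$ regularity. The paper applies the implicit function theorem to the defining relation $\tfrac{\partial F_\omega}{\partial\varphi}(u,\varphi)=0$: it computes $\tfrac{\partial^2 F_\omega}{\partial\varphi^2}$ and $\tfrac{\partial^2 F_\omega}{\partial\varphi\,\partial u}$, notes their continuity, and invokes the implicit function theorem (the invertibility of $\tfrac{\partial^2 F_\omega}{\partial\varphi^2}$ being precisely the coercivity of your form $a$). You instead factor $\Phi=S^{-1}\circ N$ explicitly and differentiate the quadratic map $N$ by hand. Your approach is more self-contained and produces the formula for $\Phi'(u)$ directly, while the paper's route is slicker because it reuses the variational structure $F_\omega$ already set up and avoids reconstructing the linear solution operator. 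Both are standard for this type of reduction and either would be acceptable here.
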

\begin{proof}
    To show that $\Phi$ is of class $C^1$, we use the implicit formulation \eqref{eq:implicit_def_Phi_Dirichlet}. Note that the derivatives of $\frac{\partial F_\omega}{\partial \varphi}$ are given by
    \begin{align}
        & \frac{\partial^2 F_\omega}{\partial \varphi \partial u} (u, \varphi)[\eta, v] = \int_\Omega u \eta v \ dx, \nonumber \\
        & \frac{\partial^2 F_\omega}{\partial \varphi^2} (u, \varphi) [\eta, \zeta] = - \frac{1}{8\pi} \left(\int_\Omega \nabla \eta \nabla \zeta \ dx + \int_\Omega \Delta \eta \Delta \zeta \ dx \right),
    \end{align}
    which are readily seen to be continuous. Therefore $\Phi$ is of class $C^1$ (see, e.g., \cite{AmbrosettiProdi1993}).

    Next, we make use of \eqref{eq:equation_for_Phi_Dirichlet} and the Sobolev embeddings to show that $\Phi$ is bounded in $H_0^1(\Omega)$. Indeed, multiplying \eqref{eq:equation_for_Phi_Dirichlet} by $\Phi(u)$ and integrating by parts twice we obtain
    \begin{equation*}
        \int_\Omega |\nabla \Phi(u)|^2 \ dx + \int_\Omega |\Delta \Phi(u)|^2 \ dx = 4 \pi \int_\Omega \Phi(u) u^2 \ dx
    \end{equation*}
    Therefore, by invoking well-known Sobolev embeddings, we have
    \begin{align}
        \|\Phi(u)\|_\H^2
        & \leq 4 \pi \|\Phi(u)\|_2 \|u^2\|_2 \nonumber \\
        & \leq c_1 \|\Phi(u)\|_\H \|u\|_4^2 \nonumber \\
        & \leq c_2 \|\Phi(u)\|_\H \|u\|^2, \nonumber
    \end{align}
    which completes the proof.
\end{proof}

With the map $\Phi$ at hand, we can define the following reduced functional:
\begin{equation*}
    J_\omega : u \in H_0^1(\Omega) \mapsto F_\omega(u, \Phi(u)) \in \mathbb R,
\end{equation*}
which can be written as
\begin{align}
    J_\omega(u)
    & = \frac{1}{4} \int_\Omega |\nabla u|^2 \ dx + \frac{1}{2} \int_\Omega (\chi - \omega) u^2 \ dx - \int_\Omega G(x, u) \ dx \nonumber \\
    & \quad + \frac{1}{16 \pi} \int_\Omega |\nabla \Phi(u)|^2 \ dx + \frac{1}{16 \pi} \int_\Omega |\Delta \Phi(u)|^2 \ dx. \label{eq:def_J}
\end{align}

By making use of the chain rule together with \eqref{eq:implicit_def_Phi_Dirichlet}, we obtain
\begin{align}
    J_\omega'(u)[v]
    & = \frac{\partial F_\omega}{\partial u}(u, \Phi(u))[v] + \left(\frac{\partial F_\omega}{\partial \varphi
    }(u, \Phi(u)) \circ \Phi'(u)\right) [v] \nonumber \\
    & = \frac{1}{2} \int_\Omega \nabla u \nabla v \ dx + \int_\Omega (\Phi(u) + \chi - \omega) u v \ dx - \int_\Omega g(x, u) v \ dx. \label{eq:J_prime}  
\end{align}

The next result tells us that the problem \eqref{eq:modified_system_Dirichlet} can indeed be studied through the functional $J_\omega$.

\begin{prop}
    \label{prop:equivalence_F_J_Dirichlet}
    The pair $(u, \varphi) \in H_0^1(\Omega) \times \H$ is a critical point of $F_\omega$ if and only if $u$ is a critical point of $J_\omega$ and $\varphi = \Phi(u)$.
\end{prop}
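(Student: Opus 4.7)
The plan is to prove both implications by reducing everything to the two statements that (a) $\Phi(u)$ is characterised among elements of $\H$ as the unique solution of $\partial_\varphi F_\omega(u,\cdot)=0$, and (b) the chain rule expression for $J_\omega'(u)$ established in \eqref{eq:J_prime} splits the derivative of $F_\omega$ at $(u,\Phi(u))$ into its $u$-partial part plus a term that is automatically zero on the graph of $\Phi$.

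For the forward direction, suppose $(u,\varphi)$ is a critical point of $F_\omega$. Then in particular $\partial F_\omega/\partial\varphi(u,\varphi)=0$. By \eqref{eq:implicit_def_Phi_Dirichlet} this is precisely the weak formulation of \eqref{eq:equation_for_Phi_Dirichlet}, whose solution is unique (the bilinear form $(\eta,\zeta)\mapsto \int_\Omega\nabla\eta\nabla\zeta+\int_\Omega\Delta\eta\Delta\zeta$ is coercive on $\H$). Hence $\varphi=\Phi(u)$. Next, plugging $\varphi=\Phi(u)$ into \eqref{eq:J_prime} gives
\begin{equation*}
J_\omega'(u)[v]=\frac{\partial F_\omega}{\partial u}(u,\Phi(u))[v]+\frac{\partial F_\omega}{\partial\varphi}(u,\Phi(u))\bigl[\Phi'(u)[v]\bigr]=0+0,
\end{equation*}
for every $v\in H_0^1(\Omega)$, using that the full gradient of $F_\omega$ at $(u,\varphi)=(u,\Phi(u))$ vanishes. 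Thus $u$ is critical for $J_\omega$.

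For the converse, assume $u$ is a critical point of $J_\omega$ and set $\varphi=\Phi(u)$. By the very definition of $\Phi$ we have $\partial F_\omega/\partial\varphi(u,\Phi(u))=0$, so the second summand in \eqref{eq:J_prime} is zero; combined with $J_\omega'(u)=0$ this forces $\partial F_\omega/\partial u(u,\Phi(u))[v]=0$ for all $v\in H_0^1(\Omega)$. Both partial derivatives of $F_\omega$ therefore vanish at $(u,\Phi(u))$, and since $F_\omega\in C^1(H_0^1(\Omega)\times\H)$ this means $(u,\Phi(u))$ is a critical point of $F_\omega$.

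I do not expect any genuine obstacle here: the proposition is essentially a formal consequence of the implicit definition of $\Phi$ and the chain rule \eqref{eq:J_prime}. The only point that deserves a line of justification is the uniqueness of the solution to \eqref{eq:equation_for_Phi_Dirichlet}, needed to conclude $\varphi=\Phi(u)$ from $\partial_\varphi F_\omega=0$; this follows from the coercivity of the bilinear form $\|\cdot\|_\H^2+\|\nabla\cdot\|_2^2$ on $\H$, which was already implicitly used when invoking the auxiliary-problem argument after \eqref{eq:equation_for_Phi_Dirichlet}.
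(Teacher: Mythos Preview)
Your proof is correct and follows essentially the same approach as the paper's: both directions rely on the implicit characterisation \eqref{eq:implicit_def_Phi_Dirichlet} of $\Phi$ to handle the $\varphi$-partial derivative and then on the chain rule formula \eqref{eq:J_prime} to transfer criticality between $J_\omega$ and the $u$-partial derivative of $F_\omega$. Your explicit mention of uniqueness (via coercivity) for the auxiliary problem is a welcome clarification that the paper leaves implicit.
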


\begin{proof}
    Suppose $(u, \varphi)$ is a critical point for $F_\omega$. From \eqref{eq:implicit_def_Phi_Dirichlet}, it follows that $\varphi = \Phi(u)$, and then it follows from \eqref{eq:F_u} that $J_\omega'(u) = 0$.

    Conversely, if $\varphi = \Phi(u)$ then $\frac{\partial F_\omega}{\partial \varphi}(u, \Phi(u)) = 0$ by \eqref{eq:implicit_def_Phi_Dirichlet}, and $\frac{\partial F_\omega}{\partial u}(u, \Phi(u)) = 0$ since $J_\omega'(u) = 0$ (taking into account \eqref{eq:F_u} and \eqref{eq:J_prime}).
\end{proof}

\subsection{Analysis of the reduced functional}

We begin by showing that the reduced functional $J_\omega$ defined in \eqref{eq:def_J} satisfies the Palais-Smale condition.

\begin{lemma}
    \label{lem:PS_condition}
    The functional $J_\omega$ defined in \eqref{eq:def_J} satisfies the Palais-Smale condition.
\end{lemma}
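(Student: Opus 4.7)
The plan is to follow the standard two-step approach for verifying the Palais-Smale condition in superlinear variational problems: first establish boundedness of an arbitrary Palais-Smale sequence, then upgrade weak convergence of a subsequence to strong convergence in $H_0^1(\Omega)$.

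Let $(u_n) \subset H_0^1(\Omega)$ be such that $(J_\omega(u_n))$ is bounded and $J_\omega'(u_n) \to 0$ in $H^{-1}(\Omega)$. The natural combination to consider for boundedness is $\mu J_\omega(u_n) - J_\omega'(u_n)[u_n]$, where $\mu$ is the constant from \ref{g4}. Using \eqref{eq:def_J}, \eqref{eq:J_prime}, and the identity $\|\Phi(u)\|_\H^2 = 4\pi \int_\Omega \Phi(u)u^2\,dx$ obtained by testing \eqref{eq:equation_for_Phi_Dirichlet} against $\Phi(u)$, this combination rearranges to
\begin{equation*}
    \frac{\mu-2}{4}\|u_n\|^2 + \frac{\mu-2}{2}\int_\Omega (\chi-\omega)u_n^2\,dx + \int_\Omega[g(x,u_n)u_n-\mu G(x,u_n)]\,dx + \frac{\mu-4}{16\pi}\|\Phi(u_n)\|_\H^2.
\end{equation*}
Since $\mu>4$, the coefficients of $\|u_n\|^2$ and $\|\Phi(u_n)\|_\H^2$ are strictly positive. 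By \ref{g3} and \ref{g4}, the integrand $g(x,\xi)\xi-\mu G(x,\xi)$ is nonnegative for $|\xi|\geq r$ and uniformly bounded on $\{|\xi|<r\}$, so the third summand is bounded below by a constant independent of $n$. The upper bound comes from the Palais-Smale hypothesis: the left-hand side is at most $\mu|J_\omega(u_n)|+\|J_\omega'(u_n)\|_{H^{-1}}\|u_n\|$, which is $O(1)+o(1)\|u_n\|$.

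The main obstacle in Step 1 is absorbing the indefinite linear-potential term $\tfrac{\mu-2}{2}\int(\chi-\omega)u_n^2\,dx$, whose negative part is controlled only by $\|u_n\|_2^2$ rather than $\|u_n\|^2$. I would handle it by combining Poincaré's inequality with the superquadratic lower bound on $G$ implied by \ref{g4} (namely $G(x,\xi)\geq c|\xi|^\mu - c'$ for $|\xi|$ large, yielding $\|u_n\|_2^2\leq\varepsilon\|u_n\|_\mu^\mu+C_\varepsilon$ by Young's inequality), and absorbing the $\varepsilon\|u_n\|_\mu^\mu$ term into the integral involving $g(x,u_n)u_n-\mu G(x,u_n)$ after refining the constants. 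This yields $\|u_n\|\leq C$.

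For Step 2, the bounded sequence $(u_n)$ admits a subsequence, still denoted $(u_n)$, with $u_n\rightharpoonup u$ in $H_0^1(\Omega)$, $u_n\to u$ in $L^s(\Omega)$ for every $s\in[1,6)$ by Rellich-Kondrachov, and $u_n(x)\to u(x)$ a.e. I would then test the expression
\begin{equation*}
    (J_\omega'(u_n)-J_\omega'(u))[u_n-u] = \frac{1}{2}\|u_n-u\|^2 + \int_\Omega(\chi-\omega)(u_n-u)^2\,dx + I_n^\Phi - I_n^g,
\end{equation*}
where $I_n^\Phi=\int[\Phi(u_n)u_n-\Phi(u)u](u_n-u)\,dx$ and $I_n^g=\int[g(x,u_n)-g(x,u)](u_n-u)\,dx$. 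The left-hand side tends to $0$ because $J_\omega'(u_n)\to 0$, $u_n-u$ is bounded, and $J_\omega'(u)[u_n-u]\to 0$ by weak convergence. The $(\chi-\omega)$-integral vanishes by $L^2$-compactness; $I_n^g\to 0$ by \ref{g3} and the $L^s$-compactness for $s<6$ (via dominated convergence after a standard majorant argument); and $I_n^\Phi\to 0$ using Lemma \ref{lem:map_Phi}, the embedding $\H\hookrightarrow L^6(\Omega)$, and Hölder's inequality with the strong $L^3$-convergence of $u_n$. Therefore $\|u_n-u\|\to 0$, completing the proof.
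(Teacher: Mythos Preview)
Your two-step outline is sound and both steps can be made to work, but each differs from the paper's route, and there is one loose joint in your Step~1 sketch.

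\textbf{Step 1.} The paper does not use the single combination $\mu J_\omega(u_n)-J_\omega'(u_n)[u_n]$. It first bounds the positive quadratic part of $J_\omega(u_n)$ by $M_2+\tfrac1\mu\int g(x,u_n)u_n\,dx$ via \ref{g4}, then estimates $\int g(x,u_n)u_n\,dx$ from $|J_\omega'(u_n)[u_n]|$. After the $\Phi$-terms cancel (here $\mu>4$ is used) this gives
\[
\frac{\mu-2}{4\mu}\bigl(\|u_n\|^2-(\|\chi\|_\infty-\omega)\|u_n\|_2^2\bigr)\le M_2+\frac{M_3}{\mu}\|u_n\|,
\]
and the indefinite $L^2$-term is handled by a case split: if $\|\chi\|_\infty-\omega\le 0$ the bound is immediate, otherwise unboundedness of $\|u_n\|$ would force $\|u_n\|_2\to\infty$ and hence, via $G(x,\xi)\ge b_1|\xi|^\mu-b_2$, $J_\omega(u_n)\to-\infty$, a contradiction. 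Your direct route is cleaner, but the phrase ``absorbing $\varepsilon\|u_n\|_\mu^\mu$ into the integral involving $g(x,u_n)u_n-\mu G(x,u_n)$'' does not work as written: \ref{g4} gives only $g(x,\xi)\xi-\mu G(x,\xi)\ge 0$ for $|\xi|\ge r$, with no residual $c|\xi|^\mu$ to absorb against. The easy fix is to run the combination with some $\mu'\in(4,\mu)$ in place of $\mu$, so that $g(x,\xi)\xi-\mu'G(x,\xi)\ge(\mu-\mu')G(x,\xi)\ge c|\xi|^\mu-c'$ and the absorption goes through.

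\textbf{Step 2.} Here too the paper argues differently: it rewrites \eqref{eq:J_prime} as
\[
-\tfrac12\Delta u_n = J_\omega'(u_n)-\Phi(u_n)u_n-(\chi-\omega)u_n+g(x,u_n)
\]
in $H^{-1}(\Omega)$ and shows each right-hand term lies in a relatively compact subset of $H^{-1}(\Omega)$ (compactness of the Nemitski operator for $g$, Rellich for the linear-potential term, and $L^{3/2}(\Omega)\hookrightarrow H^{-1}(\Omega)$ for $\Phi(u_n)u_n$), so a subsequence of $(u_n)$ converges strongly. Your approach via $(J_\omega'(u_n)-J_\omega'(u))[u_n-u]$ is the more standard one and is correct; in fact for $I_n^\Phi$ you can simplify by noting $\H\hookrightarrow L^\infty(\Omega)$ in three dimensions, so $\|\Phi(u_n)\|_\infty$ is bounded and $L^2$-convergence of $u_n$ already suffices.
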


\begin{proof}
    Let $(u_n)_{n \in \mathbb N}$ be a Palais-Smale sequence, that is, 
    \begin{align}
        & |J(u_n)| \leq M \quad \forall n \in \mathbb N, \text{ for some } M > 0, \ \label{eq:PS_a} \\
        & J'(u_n) \to 0 \quad \text{ in } H^{-1} \text{ as } n \to \infty. \label{eq:PS_b} 
    \end{align}

    As usual in this type of argument, the first step is to show that the sequence $(u_n)_{n \in \mathbb N}$ is bounded. To this aim, we take $r$ as in \ref{g4} and make use of \eqref{eq:PS_a} to obtain
    \begin{align}
        \frac{1}{4} \int_\Omega |\nabla u|^2 \ dx 
        & + \frac{1}{2} \int_\Omega (\chi - \omega) u^2 \ dx + \frac{1}{16 \pi} \int_\Omega |\nabla \Phi(u)|^2 \ dx + \frac{1}{16 \pi} \int_\Omega |\Delta \Phi(u)|^2 \ dx \nonumber \\
        & \leq M + \int_{\{x \in \Omega \ : \ |u_n(x)| < r\}} |G(x, u_n)| \ dx + \int_{\{x \in \Omega \ : \ |u_n(x)| \geq r\}} |G(x, u_n)| \ dx \nonumber \\
        & \leq M_1 + \frac{1}{\mu} \int_{\{x \in \Omega \ : \ |u_n(x)| \geq r\}} g(x, u_n) u_n \ dx \nonumber \\
        & \leq M_2 + \frac{1}{\mu} \int_\Omega g(x, u_n) u_n \ dx \quad \forall n \in \mathbb N, \label{eq:estimate_PS_a}
    \end{align}
    where $M_1$ and $M_2$ are suitable positive constants. 

    On the other hand, since
    \begin{align}
        |J_\omega'(u_n)[u_n]|
        & = \left|\frac{1}{2} \int_\Omega |\nabla u_n|^2 \ dx + \int_\Omega (\Phi(u) + \chi - \omega) u_n^2 \ dx - \int_\Omega g(x, u_n) u_n \ dx\right| \nonumber \\
        & \geq \left| \left|\int_\Omega g(x, u_n) u_n \ dx\right| - \left|\frac{1}{2} \int_\Omega |\nabla u_n|^2 \ dx + \int_\Omega (\Phi(u_n) + \chi - \omega) u_n^2 \ dx\right| \right| \nonumber \\
        & \geq \int_\Omega g(x, u_n) u_n \ dx - \left|\frac{1}{2} \int_\Omega |\nabla u_n|^2 \ dx + \int_\Omega (\Phi(u_n) + \chi - \omega) u_n^2 \ dx\right|, \nonumber
    \end{align}
    from \eqref{eq:PS_b} we obtain that there exists some positive constant $M_3$ such that $|J_\omega'(u_n)[u_n]| \leq M_3 \|u_n\|$, and therefore
    \begin{align}
        \int_\Omega g(x, u_n) u_n \ dx
        & \leq M_3 \|u_n\| + \frac{1}{2} \int_\Omega |\nabla u_n|^2 \ dx + \int_\Omega (\Phi(u_n) + \chi - \omega) u_n^2 \ dx \nonumber \\
        & = M_3 \|u_n\| + \frac{1}{2} \|u_n\|^2 + \int_\Omega \chi u_n^2 \ dx - \omega \|u_n\|_2^2 + \frac{1}{4\pi} \|\Phi(u_n)\|^2. \nonumber 
    \end{align}
    Substituting into \eqref{eq:estimate_PS_a}, we obtain
    \begin{align}
        \frac{1}{4} \int_\Omega |\nabla u|^2 \ dx 
        & + \frac{1}{2} \int_\Omega (\chi - \omega) u^2 \ dx + \frac{1}{16 \pi} \int_\Omega |\nabla \Phi(u)|^2 \ dx + \frac{1}{16 \pi} \int_\Omega |\Delta \Phi(u)|^2 \ dx \nonumber \\
        & \leq M_2 + \frac{1}{\mu}\left(M_3 \|u_n\| + \frac{1}{2} \|u_n\|^2 + \int_\Omega \chi u_n^2 \ dx - \omega \|u_n\|_2^2 + \frac{1}{4\pi} \|\Phi(u_n)\|^2 \right) \nonumber \\
        & \leq M_2 + \frac{1}{\mu}\left(M_3 \|u_n\| + \frac{1}{2} \|u_n\|^2 + (\|\chi\|_\infty - \omega) \|u_n\|_2^2 + \frac{1}{4\pi} \|\Phi(u_n)\|^2 \right) \quad \forall n \in \mathbb N. \nonumber
    \end{align}
    Since $\mu > 4$, we have
    \begin{equation*}
        \frac{1}{4\pi \mu} \|\Phi(u_n)\|^2 - \frac{1}{16\pi} \left(\int_\Omega |\nabla \Phi(u_n)|^2 \ dx + \int_\Omega |\Delta \Phi(u_n)|^2 \ dx \right) < 0,
    \end{equation*}
    and therefore we have
    \begin{equation}
        \label{eq:estimate_PS_b}
        \frac{\mu - 2}{4\mu} (\|u_n\|^2 - (\|\chi\|_\infty - \omega)\|u_n\|_2^2) \leq M_2 + \frac{M_3}{\mu} \|u_n\| \quad \forall n \in \mathbb N.
    \end{equation}

    If $\|\chi\|_\infty - \omega \leq 0$, we readily obtain that $(u_n)_{n \in \mathbb N}$ is bounded. If, instead, it holds $\|\chi\|_\infty - \omega > 0$, then from \eqref{eq:estimate_PS_b} we infer that
    \begin{align}
        \|u_n\|_2^2
        & \geq \frac{c_3}{\|\chi\|_\infty - \omega} \left( \frac{\mu - 2}{4\mu}\|u_n\|^2 - \frac{M_3}{\mu}\|u_n\| - M_2 \right) \nonumber \\
        & \geq c_4 \|u_n\|^2 - c_5\|u_n\| - c_6 \quad \forall n \in \mathbb N. \nonumber
    \end{align}
    Now, should the sequence $(u_n)_{n \in \mathbb N}$ be unbounded in $H_0^1(\Omega)$, we would obtain $\|u_n\|_2^2 \to + \infty$ as $n \to \infty$. However, from assumption \ref{g4} we deduce that there exist constants $b_1, b_2 > 0$ such that for any $x \in \overline \Omega$ and $\xi \in \mathbb R$ it holds
    \begin{equation*}
        G(x, \xi) \geq b_1 |\xi|^\mu - b_2.
    \end{equation*}
    But then, since $\Phi$ is bounded, we have
    \begin{align}
        J_\omega(u_n)
        & \leq \frac{1}{4}\|u_n\|^2 + \frac{1}{2}(\|\chi\|_\infty - \omega) \|u_n\|_2^2 - b_1 \int_\Omega \|u_n\|^\mu \ dx + b_2 |\Omega| + \frac{1}{16\pi}\|\Phi(u_n)\|^2 \nonumber \\
        & \leq c_7\|u_n\|^2 + c_8 \|u_n\|_2^2 - b_1 \int_\Omega |u_n|^\mu \ dx + b_2|\Omega| + c_9\|u_n\|_2^2 \nonumber \\
        & \to - \infty \quad \text{ as } n \to \infty, \nonumber 
    \end{align}
    since $\mu > 4$. This contradicts \eqref{eq:PS_a}, and therefore we conclude that $(u_n)_{n \in \mathbb N}$ is bounded also in case $\|\chi\|_\infty - \omega > 0$.

    Since $(u_n)_{n \in \mathbb N}$ is bounded, then there exists $u \in H_0^1(\Omega)$ such that
    \begin{equation*}
        u_n \rightharpoonup u \quad \text{ weakly in } H_0^1(\Omega) \text{ as } n \to \infty.
    \end{equation*}
    We now proceed to show that the convergence is, in fact, strong in $H_0^1(\Omega)$. 

    To this aim, we apply \eqref{eq:J_prime} to obtain
    \begin{equation}
        \label{eq:PS_J_prime}
        - \frac{1}{2} \Delta u_n = J_\omega'(u_n) - \Phi(u_n)u_n + (\chi - \omega) u_n + g(x, u_n),
    \end{equation}
    where this equality is understood in $H^{-1}(\Omega)$. Now, since the resolvent operator $(- \Delta)^{-1}: H^{-1}(\Omega) \to H_0^1(\Omega)$ is compact, to conclude our proof it suffices to show that the right-hand side in \eqref{eq:PS_J_prime} is bounded.

    By assumption, $J_\omega'(u_n) \to 0$ as $n \to \infty$, so $\{J_\omega'(u_n)\}_{n \in \mathbb N}$ is bounded. Since $g$ is continuous and has subcritical growth, the Nemitski operator
    \begin{equation*}
        u \in H_0^1(\Omega) \mapsto g(x, u) \in H^{-1}(\Omega)
    \end{equation*}
    is compact (see, for example, \cite{AmbrosettiMalchiodi2007, AmbrosettiProdi1993}) and therefore, since $(u_n)_{n \in \mathbb N}$ is bounded, also $\{g(x, u_n)\}_{n \in \mathbb N}$ converges in $H^{-1}(\Omega)$. Furthermore, since $H_0^1(\Omega)$ is reflexive, then $H^{-1}(\Omega)$ is reflexive. Since $(\chi - \omega) u_n \to (\chi - \omega) u$ in the weak-$^*$ topology of $H^{-1}(\Omega)$, then $\{(\chi - \omega)u_n\}_{n \in \mathbb N}$ is bounded in $H^{-1}(\Omega)$. Finally, Hölder's inequality together with \eqref{eq:bound_Phi} yields
    \begin{align}
        \|\Phi(u_n) u_n\|_{3/2}^{3/2}
        & \leq \|\Phi(u_n)\|_3^{3/2} \|u_n\|_3^{3/2} \nonumber \\
        & \leq c_{10} \|\Phi(u_n)\|^{3/2} \|u_n\|_3^{3/2} \nonumber \\
        & \leq c_{11} \|u_n\|^3 \|u_n\|_3^{3/2}.
    \end{align}
    Hence $\{\Phi(u_n)u_n\}_{n \in \mathbb N}$ is bounded in $L^{3/2}(\Omega)$, which is continuously embedded into $H^{-1}(\Omega)$, because $H_0^1(\Omega)$ is continuously embedded into $L^3(\Omega)$.

    Hence all terms in the right-hand side of \eqref{eq:PS_J_prime} are bounded, and since $(- \Delta)^{-1}$ is compact, it follows that the sequence $(u_n)_{n \in \mathbb N}$ converges strongly in $H_0^1(\Omega)$.
\end{proof}

\begin{prop}
    \label{prop:Critical_points_J_Dirichlet}
    For any $\omega \in \mathbb R$, the functional $J_\omega$ defined in \eqref{eq:def_J} has infinitely many critical points in $H_0^1(\Omega)$.
\end{prop}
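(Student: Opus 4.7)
The plan is to apply the symmetric Mountain Pass Theorem (Theorem~\ref{thm:Mountain_Pass}) to $J_\omega$ on $E = H_0^1(\Omega)$. The functional is of class $C^1$ (since $F_\omega \in C^1$ and $\Phi \in C^1$ by Lemma~\ref{lem:map_Phi}), satisfies the Palais--Smale condition by Lemma~\ref{lem:PS_condition}, and vanishes at the origin because $\Phi(0) = 0$ and $G(x, 0) = 0$. Evenness follows from $(g_1)$ (so $G(x, \cdot)$ is even) together with the fact that $\Phi(-u) = \Phi(u)$, since the equation \eqref{eq:equation_for_Phi_Dirichlet} defining $\Phi(u)$ depends on $u$ only through $u^2$. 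It therefore remains to verify the two geometric conditions.

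For the local linking condition, I would decompose $H_0^1(\Omega) = V \oplus X$ with $V = \bigoplus_{j=1}^{k} H_j$ the span of the first $k$ eigenspaces of $-\Delta$ in $H_0^1(\Omega)$, $X$ its orthogonal complement, and $k = k(\omega)$ to be chosen. For $u \in X$ one has $\|u\|_2^2 \leq \lambda_{k+1}^{-1}\|u\|^2$. Combining $(g_2)$ and $(g_3)$, for every $\varepsilon > 0$ there exists $C_\varepsilon > 0$ with $|G(x, \xi)| \leq \varepsilon \xi^2 + C_\varepsilon |\xi|^p$. Dropping the nonnegative $\Phi$-terms and applying the Sobolev embedding $H_0^1(\Omega) \hookrightarrow L^p(\Omega)$ (valid since $p \in (4,6)$), one gets on $X$
\begin{equation*}
    J_\omega(u) \;\geq\; \left(\frac{1}{4} - \frac{\|\chi\|_\infty + |\omega| + 2\varepsilon}{2\lambda_{k+1}}\right)\|u\|^2 \;-\; c\,C_\varepsilon\,\|u\|^p.
\end{equation*}
Since $\lambda_k \to \infty$, choosing $k$ large enough and then $\varepsilon$ small makes the coefficient of $\|u\|^2$ strictly positive, and since $p > 2$ a small $\rho = \rho(\omega) > 0$ produces $\alpha > 0$ with $J_\omega \geq \alpha$ on $\partial B_\rho \cap X$.

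For the second condition, $(g_4)$ yields constants $b_1, b_2 > 0$ with $G(x, \xi) \geq b_1 |\xi|^\mu - b_2$ (as already used in the proof of Lemma~\ref{lem:PS_condition}), and Lemma~\ref{lem:map_Phi} gives $\|\Phi(u)\|_\H^2 \leq C^2 \|u\|^4$. Consequently
\begin{equation*}
    J_\omega(u) \;\leq\; c_1\|u\|^2 + c_2\|u\|^4 - b_1 \|u\|_\mu^\mu + b_2|\Omega|.
\end{equation*}
On any finite-dimensional subspace $\widetilde E \subset H_0^1(\Omega)$, all norms are equivalent, so $\|u\|_\mu^\mu \geq c_3\|u\|^\mu$; since $\mu > 4$, the right-hand side tends to $-\infty$ uniformly on $\widetilde E$ as $\|u\| \to \infty$, which produces the required radius $R(\widetilde E)$. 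Theorem~\ref{thm:Mountain_Pass} then supplies an unbounded sequence of critical values, and in particular infinitely many critical points.

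The main subtle point I expect is the $\omega$-dependence of the splitting $V \oplus X$: the sign-indefinite quadratic term $\tfrac{1}{2}\int_\Omega(\chi - \omega)u^2\,dx$ is controlled only by moving far enough up the spectrum of $-\Delta$, which is legitimate because $\lambda_k \to \infty$; everything else is a routine verification modelled on the classical superlinear Mountain Pass argument, with the extra $\Phi$-terms being harmless (nonnegative for the lower bound, quartic for the upper bound, and therefore dominated by the $\mu$-superquartic growth of $G$).
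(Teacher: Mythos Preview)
Your proof is correct and follows essentially the same route as the paper's: apply the symmetric Mountain Pass theorem after checking evenness, $J_\omega(0)=0$, the Palais--Smale condition, the local lower bound on a spectral complement $X$, and the superquartic decay on finite-dimensional subspaces. The only cosmetic differences are that the paper splits condition~\textit{(i)} into the two cases $\|\chi\|_\infty+\omega<\tfrac12\lambda_1$ (where $V=\{0\}$) and $\|\chi\|_\infty+\omega\geq\tfrac12\lambda_1$, whereas you handle both at once by taking $k$ large, and that you make the finite-dimensional norm equivalence for condition~\textit{(ii)} explicit where the paper leaves it implicit.
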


\begin{proof}
    Our aim is to apply Theorem \ref{thm:Mountain_Pass} to the functional $J_\omega$. Since $J_\omega$ is even (because so is $\Phi$), $J_\omega(0) = 0$ and $J_\omega$ satisfies the Palais-Smale condition (Lemma \ref{lem:PS_condition}), it remains only to prove that the geometrical conditions of Theorem \ref{thm:Mountain_Pass} hold.

    We begin by recalling from the proof of Lemma \ref{lem:PS_condition} that
    \begin{align}
        J_\omega(u) 
        & \leq \frac{1}{4}\|u\|^2 + \frac{1}{2}(\|\chi\|_\infty - \omega) \|u\|_2^2 - b_1 \int_\Omega \|u\|^\mu \ dx + b_2 |\Omega| + \frac{1}{16\pi}\|\Phi(u)\|^2 \nonumber \\
        & \to - \infty \nonumber
    \end{align}
    as $\|u\| \to + \infty$, since $\mu > 4$. Therefore, condition \textit{(ii)} in Theorem \ref{thm:Mountain_Pass} is satisfied.

    Now, suppose that
    \begin{equation}
        \label{eq:case_1_Dirichlet}
        \|\chi\|_\infty + \omega < \frac{1}{2} \lambda_1,
    \end{equation}
    where $\lambda_1$ is the first eigenvalue of $- \Delta$ in $H_0^1(\Omega)$ (see Section \ref{sec:prelim}). In this case, we can take $V = \{0\}$ and $X = H_0^1(\Omega)$. Indeed, $J_\omega$ has a strict local minimum at $u = 0$. This can be proven as follows. From \ref{g2} and \ref{g3} we deduce that for every $\varepsilon > 0$ there exists $C_\varepsilon > 0$ such that for any $x \in \overline \Omega$ and $\xi \in \mathbb R$ it holds
    \begin{equation*}
        |G(x, \xi)| \leq \frac{\varepsilon}{2} \xi^2 + C_\varepsilon |\xi|^p,
    \end{equation*}
    for $p \in (4, 6)$.

    Let us fix a number $c$ such that $\|\chi\|_\infty + \omega < c < \frac{1}{2} \lambda_1$. Since $H_0^1(\Omega) \hookrightarrow L^p(\Omega)$, by Poincaré inequality we obtain
    \begin{align}
        J_\omega(u)
        & \geq \frac{1}{4} \int_\Omega |\nabla u|^2 \ dx - \frac{1}{2} \|\chi\|_\infty \int_\Omega u^2 \ dx - \frac{1}{2} \omega \int_\Omega u^2 \ dx - \int_\Omega G(x, u) \ dx \nonumber \\
        & \geq \frac{1}{4}(\|u\|^2 - 2(\|\chi\|_\infty + \omega) \|u\|_2^2) - \frac{\varepsilon}{2} \|u\|_2^2 - C_\varepsilon \|u\|_p^p \nonumber \\
        & > \frac{1}{4} \frac{\lambda_1 - c}{\lambda_1}\|u\|^2 - \frac{\varepsilon}{2 \lambda_1} \|u\|^2 - C_\varepsilon' \|u\|^p \quad \forall u \in H_0^1(\Omega), \nonumber
    \end{align}
    which is positive for $\|u\| = \rho$ if $\rho$ is sufficiently small. So condition \textit{(i)} of Theorem \ref{thm:Mountain_Pass} is satisfied in the case when \eqref{eq:case_1_Dirichlet} holds.

    Next, we assume that 
    \begin{equation}
        \label{eq:case_2_Dirichlet}
        \frac{1}{2} \lambda_1 \leq \|\chi\|_\infty + \omega
    \end{equation}
    and let $c \in \left(\frac{1}{2} \lambda_1, \|\chi\|_\infty + \omega \right)$. Set 
    \begin{equation*}
        k_\omega \coloneqq \min\{k \in \mathbb N \ : \ \|\chi\|_\infty + \omega < \lambda_k\}, 
    \end{equation*}
    and consider the following splitting of $H_0^1(\Omega)$:
    \begin{equation*}
        H_0^1(\Omega) = V_k \oplus X,
    \end{equation*}
    where
    \begin{equation*}
        V_k = \bigoplus_{k = 1}^{k_\omega - 1} H_k, \quad X = V_k^\perp = \overline{\bigoplus_{k = k_\omega}^\infty H_k}.
    \end{equation*}
    Then, since
    \begin{equation*}
        \|v\|^2 \geq \lambda_{k_\omega} \|v\|^2 \quad \forall v \in X,
    \end{equation*}
    (by the variational characterization of the eigenvalues of $- \Delta$ in $H_0^1(\Omega)$), we have
    \begin{align}
        J_\omega(u)
        & \geq \frac{1}{4} \int_\Omega |\nabla u|^2 \ dx - \frac{1}{2} \|\chi\|_\infty \int_\Omega u^2 \ dx - \frac{1}{2} \omega \int_\Omega u^2 \ dx - \int_\Omega G(x, u) \ dx \nonumber \\
        & \geq \frac{1}{4}(\|u\|^2 - 2(\|\chi\|_\infty + \omega) \|u\|_2^2) - \frac{\varepsilon}{2} \|u\|_2^2 - C_\varepsilon \|u\|_p^p \nonumber \\
        & > \frac{1}{4} \frac{\lambda_{k_\omega} - c}{\lambda_{k_\omega}}\|u\|^2 - \frac{\varepsilon}{2 \lambda_1} \|u\|^2 - C_\varepsilon' \|u\|^p \quad \forall u \in X, \nonumber
    \end{align}
    so that $J_\omega$ is positive in small spheres of $X$.

    We can therefore apply Theorem \ref{thm:Mountain_Pass} to conclude the proof.
\end{proof}

\begin{theorem}
    \label{thm:main_Dirichlet}
    Let $\Omega \subset \mathbb R^3$ be a smooth, bounded domain. If $g \in C(\overline \Omega \times \mathbb R)$ satisfies \ref{g1}-\ref{g4}, then for every triple $(h_1, h_2, \omega) \in C(\po) \times C(\po) \times \mathbb R$ there exist infinitely many weak solutions $(u, \phi) \in H_0^1(\Omega) \times H^2(\Omega)$ to the problem
    \begin{equation*}
    \left\{
    \begin{array}{rcll}
        - \frac{1}{2} \Delta u + \phi u - g(x, u) & = & \omega u & \quad \text{ in } \Omega \\
        - \Delta \phi + \Delta^2 \phi & = & 4 \pi u^2 & \quad \text{ in } \Omega \\
        u & = & 0 & \quad \text{ on } \po \\
        \phi & = & h_1 & \quad \text{ on } \po \\
        \Delta \phi & = & h_2 & \quad \text{ on } \po \\
    \end{array}
    \right.
    .
\end{equation*}
\end{theorem}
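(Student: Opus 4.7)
The plan is to chain together the results already established in Sections 2 and 3 so that each piece of data $(h_1,h_2,\omega)$ is converted, step by step, into infinitely many weak solutions of the original inhomogeneous Dirichlet problem.

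First I would invoke Lemma \ref{lem:exitence_auxiliary_problem} on the continuous boundary data $h_1,h_2$ to produce a fixed function $\chi \in C^4(\Omega)\cap C(\overline\Omega)$ solving the auxiliary problem \eqref{eq:auxiliary_problem_Dirichlet}. Using this $\chi$, I would form the modified system \eqref{eq:modified_system_Dirichlet}, which now has homogeneous boundary conditions in both $u$ and $\varphi$, and set up the functional $F_\omega$ on $H_0^1(\Omega)\times\H$ whose critical points correspond exactly to weak solutions of \eqref{eq:modified_system_Dirichlet} by Proposition \ref{prop:variational_formulation_Dirichlet}.

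Next I would reduce to the single-variable functional by invoking the map $\Phi$ and the reduced functional $J_\omega$ defined in \eqref{eq:def_J}. Proposition \ref{prop:Critical_points_J_Dirichlet} already produces infinitely many critical points $(u_n)_{n\in\mathbb N}\subset H_0^1(\Omega)$ of $J_\omega$, and Proposition \ref{prop:equivalence_F_J_Dirichlet} then upgrades each $u_n$ to a critical pair $(u_n,\Phi(u_n))\in H_0^1(\Omega)\times\H$ of $F_\omega$; by Proposition \ref{prop:variational_formulation_Dirichlet} these are weak solutions of \eqref{eq:modified_system_Dirichlet}. To obtain infinitely many \emph{distinct} solutions I would appeal to the fact that Theorem \ref{thm:Mountain_Pass} actually yields an unbounded sequence of critical values of $J_\omega$, so the corresponding $u_n$ are necessarily pairwise distinct, and hence so are the pairs $(u_n,\Phi(u_n))$.

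Finally I would undo the change of variables by defining $\phi_n \coloneqq \Phi(u_n)+\chi$ for each $n$. Since $\Phi(u_n)\in\H$ satisfies zero Dirichlet conditions for both $\varphi$ and $\Delta\varphi$, while $\chi$ carries the boundary values $h_1$ and $h_2$ (with $\chi\in H^2(\Omega)$ by Lemma \ref{lem:exitence_auxiliary_problem}), $\phi_n\in H^2(\Omega)$ will satisfy the prescribed Dirichlet conditions $\phi_n=h_1$ and $\Delta\phi_n=h_2$ on $\po$. A direct substitution into the two PDEs of \eqref{eq:modified_system_Dirichlet} shows that $(u_n,\phi_n)$ is a weak solution of the original system \eqref{eq:pde}--\eqref{eq:Dirichlet_BC}, completing the argument. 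There is no real obstacle at this stage, since the analytical work (Palais--Smale, geometry of $J_\omega$, boundedness of $\Phi$) has already been carried out; the only care needed is the routine check that the change of variables preserves the weak formulation of both equations and that distinctness transfers from $(u_n)$ to $(u_n,\phi_n)$.
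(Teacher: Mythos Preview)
Your proposal is correct and follows exactly the approach of the paper, which simply cites Propositions \ref{prop:variational_formulation_Dirichlet}, \ref{prop:equivalence_F_J_Dirichlet}, and \ref{prop:Critical_points_J_Dirichlet} in a one-line proof. In fact, you spell out more carefully than the paper does the role of Lemma \ref{lem:exitence_auxiliary_problem}, the reversal of the change of variables $\phi_n = \Phi(u_n) + \chi$, and the reason the solutions are pairwise distinct (unbounded critical values from Theorem \ref{thm:Mountain_Pass}).
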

\begin{proof}
    Follows by combining Propositions \ref{prop:variational_formulation_Dirichlet}, \ref{prop:Critical_points_J_Dirichlet}, and \ref{prop:Critical_points_J_Dirichlet}.
\end{proof}

\begin{remark}
    The physical meaning of Theorem \ref{thm:main_Dirichlet} is that, as happens in the case of Maxwell electrodynamics (\cite{PisaniSiciliano2007b}), if the Schrödinger equation is perturbed with a suitable nonlinearity, then for any prescribed frequency $\omega$ there exist (infinitely many) stationary solutions with that frequency.
\end{remark}

\section{Non-existence of solutions for the unperturbed problem}
\label{sec:nonexistence}
\begin{lemma}
    \label{lem:max_principle_Dirichlet}
    Let $h_1, h_2 \in C(\po)$ be such that $h_1 - h_2 \geq 0$ on $\po$. If $\phi \in H_2(\Omega)$ satisfies
    \begin{equation*}
        \left\{
        \begin{array}{rcll}
            - \Delta \phi + \Delta^2 \phi & \geq & 0 & \quad \text{ in } \Omega \\
            \phi & = & h_1 & \quad \text{ on } \po \\
            \Delta \phi & = & h_2 & \quad \text{ on } \po,
        \end{array}
        \right.
    \end{equation*}
    in the weak sense, then $\phi > 0$ in $\Omega$.
\end{lemma}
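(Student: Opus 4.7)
The natural strategy is to exploit the factorization of the fourth-order operator into two second-order pieces, mirroring the reduction used in the proof of Lemma~\ref{lem:exitence_auxiliary_problem}. The key substitution is $w \coloneqq \phi - \Delta\phi$. A direct computation gives
\[
    -\Delta w \;=\; -\Delta\phi + \Delta^2\phi \;\geq\; 0 \text{ in } \Omega, \qquad w \;=\; h_1 - h_2 \;\geq\; 0 \text{ on } \po,
\]
so $w$ is superharmonic with non-negative Dirichlet boundary data.

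Granting this reduction, the weak maximum principle for the Laplacian yields $w \geq 0$ in $\Omega$, i.e.\ $-\Delta \phi + \phi \geq 0$ weakly. Thus $\phi$ itself satisfies a second-order inequality
\[
    -\Delta \phi + \phi \;\geq\; 0 \text{ in } \Omega, \qquad \phi \;=\; h_1 \text{ on } \po.
\]
Since the zeroth-order coefficient is non-negative, the weak maximum principle for $-\Delta + I$ gives $\phi \geq \min\{0,\inf_{\po} h_1\}$ in $\Omega$; under the sign assumption on $h_1$ natural for the surrounding non-existence analysis, this lower bound is $0$, so $\phi \geq 0$. The strong maximum principle for uniformly elliptic operators with non-negative zeroth-order term then upgrades this to $\phi > 0$ in $\Omega$, unless $\phi \equiv 0$, which is ruled out by any non-triviality of the data.

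The principal technical obstacle is to justify the substitution $w = \phi - \Delta\phi$ rigorously at the available regularity $\phi \in H^2(\Omega)$. There $\Delta\phi$ is only an $L^2$ function with no a priori pointwise trace, and the boundary identity $\Delta\phi = h_2$ has to be interpreted in the weak sense coming from pairing $\phi$ with test functions in $\H$. To circumvent this, I would either bootstrap elliptic regularity via the equivalent system $-\Delta\theta + \theta \geq 0$, $-\Delta\phi = \theta$ with $\theta \coloneqq -\Delta\phi$, exactly as in Lemma~\ref{lem:exitence_auxiliary_problem}, to gain enough pointwise smoothness so that the pointwise maximum-principle arguments apply classically, or carry out the whole argument directly in the $\H$ weak framework by testing the defining variational inequality against the truncations $w^- \coloneqq \min(w, 0)$ and $\phi^- \coloneqq \min(\phi,0)$, for which the Stampacchia-type chain rule in $H^1$ (extended to $H^2 \cap H_0^1$ via the reduction to the system) is available.
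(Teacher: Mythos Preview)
Your approach---the factorization $w = \phi - \Delta\phi$ followed by two successive applications of the maximum principle---is exactly the argument the paper gives, though the paper presents it in a two-line sketch that does not spell out the boundary values of $w$ or the regularity issues you raise. Your more careful treatment (in particular flagging the implicit need for $h_1 \geq 0$ in the second step and the interpretation of $\Delta\phi|_{\po}$ at the $H^2$ level) goes beyond what the paper writes but is entirely consistent with its strategy.
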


\begin{proof}
    Notice that we have
    \begin{equation*}
        - \Delta (\phi - \Delta \phi) \geq 0 \quad \text{ in } \Omega,
    \end{equation*}
    so that the maximum principle yields
    \begin{equation*}
        \phi - \Delta \phi \geq 0.
    \end{equation*}
    Another application of the maximum principle yields the claim.
\end{proof}

\begin{prop}
    If $h_1 - h_2 \geq 0$ and $\omega < \frac{\lambda_1}{2}$, then there are no solutions to the problem
    \begin{equation*}
    \left\{
    \begin{array}{rcll}
        - \frac{1}{2} \Delta u + \phi u & = & \omega u & \quad \text{ in } \Omega \\
        - \Delta \phi + \Delta^2 \phi & = & 4 \pi u^2 & \quad \text{ in } \Omega \\
        u & = & 0 & \quad \text{ on } \po \\
        \phi & = & h_1 & \quad \text{ on } \po \\
        \Delta \phi & = & h_2 & \quad \text{ on } \po \\
    \end{array}
    \right.
    .
\end{equation*}
\end{prop}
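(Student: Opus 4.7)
The plan is to proceed by contradiction, testing the Schrödinger equation against $u$ itself and combining positivity of $\phi$ (from Lemma \ref{lem:max_principle_Dirichlet}) with the variational characterization of $\lambda_1$. Suppose $(u,\phi)$ is a weak solution with $u \not\equiv 0$ (the case $u \equiv 0$ being trivial, as then $\phi$ just solves the auxiliary problem). The key observation is that the right-hand side of the second equation satisfies $4\pi u^2 \geq 0$ pointwise, so $\phi$ obeys
\begin{equation*}
    -\Delta \phi + \Delta^2 \phi \geq 0 \quad \text{in } \Omega,
\end{equation*}
together with the boundary data $\phi = h_1$, $\Delta \phi = h_2$ on $\po$. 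Since the standing hypothesis $h_1 - h_2 \geq 0$ is assumed, Lemma \ref{lem:max_principle_Dirichlet} applies and yields $\phi > 0$ in $\Omega$.

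Next, I would test the weak formulation of the Schrödinger equation $-\tfrac{1}{2}\Delta u + \phi u = \omega u$ against $u \in H_0^1(\Omega)$ itself; here $u$ is admissible as a test function thanks to the homogeneous Dirichlet condition. Integration by parts produces
\begin{equation*}
    \frac{1}{2} \int_\Omega |\nabla u|^2 \ dx + \int_\Omega \phi u^2 \ dx = \omega \int_\Omega u^2 \ dx.
\end{equation*}
Since $\phi > 0$ and $u \not\equiv 0$, the second integral on the left is strictly positive, so
\begin{equation*}
    \frac{1}{2} \|\nabla u\|_2^2 < \omega \|u\|_2^2.
\end{equation*}

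The final step is the Rayleigh-quotient bound $\|\nabla u\|_2^2 \geq \lambda_1 \|u\|_2^2$, valid for all $u \in H_0^1(\Omega)$, which combined with the previous inequality yields
\begin{equation*}
    \frac{\lambda_1}{2} \|u\|_2^2 \leq \frac{1}{2}\|\nabla u\|_2^2 < \omega \|u\|_2^2,
\end{equation*}
so $\omega > \lambda_1/2$, contradicting the hypothesis $\omega < \lambda_1/2$.

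There is no real obstacle in this argument; it is essentially a coercivity/Rayleigh-quotient computation, and its cleanliness is exactly what makes the threshold $\lambda_1/2$ sharp in this setting. The only point requiring a bit of care is the applicability of Lemma \ref{lem:max_principle_Dirichlet}: it hinges on $u^2 \geq 0$ (automatic) together with the sign condition $h_1 - h_2 \geq 0$ imposed in the statement, which together give the strict positivity of $\phi$ that drives the contradiction.
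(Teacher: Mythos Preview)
Your argument is correct and matches the paper's own proof essentially line for line: apply Lemma~\ref{lem:max_principle_Dirichlet} to get $\phi>0$, test the first equation against $u$, drop the positive term $\int_\Omega \phi u^2$, and invoke the variational characterization of $\lambda_1$ to force $\omega>\lambda_1/2$. If anything, you are more careful than the paper in explicitly noting that $u\not\equiv 0$ is needed for the strict inequality and in spelling out why the hypotheses of the lemma are met.
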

\begin{proof}
    By Lemma \ref{lem:max_principle_Dirichlet}, $\varphi > 0$ in $\Omega$. Multiplying the first equation by $u$ and integrating by parts, we obtain
    \begin{equation*}
        \omega \int_\Omega u^2 \ dx > \frac{1}{2} \int_\Omega |\nabla u|^2 \ dx,
    \end{equation*}
    which, by the variational characterization of the eigenvalues of $- \Delta$ in $H_0^1(\Omega)$, can only hold if $\omega > \frac{\lambda_1}{2}$.
\end{proof}

\section*{Acknowledgements}
This work has been partially funded by the European Union - NextGenerationEU within the framework of PNRR  Mission 4 - Component 2 - Investment 1.1 under the Italian Ministry of University and Research (MUR) program PRIN 2022 - Grant number 2022BCFHN2 - Advanced theoretical aspects in PDEs and their applications - CUP: H53D23001960006.

This research was also partially supported by Gruppo Nazionale per l'Analisi Matematica, la Probabilità e le loro Applicazioni (GNAMPA) of the Istituto Nazionale di Alta Matematica (INdAM).

\bibliographystyle{acm}
\bibliography{ref_math}

\end{document}